\newtheorem{theorem}{Theorem}
\newtheorem{lemma}{Lemma}
\newtheorem{proposition}{Proposition}
\newtheorem{corollary}{Corollary}
\newtheorem{conjecture}{Conjecture}
\theoremstyle{remark}
\newtheorem{remark}{Remark}
\theoremstyle{definition}
\newtheorem{definition}{Definition}
\title{Computational Evidence Against Quadratic--Cubic Factorization for the Second Cuboid Quintic}
\author{Valery Asiryan\\[3pt]
\small \texttt{asiryanvalery@gmail.com}
\and
Randall L. Rathbun\\[3pt]
\small \texttt{randallrathbun@gmail.com}}
\date{\small January 22, 2026}
\begin{document}

\maketitle

\begin{abstract}
Let $Q_{p,q}(t)\in\mathbb{Z}[t]$ be Sharipov's even monic degree-$10$ \emph{second cuboid polynomial} depending on coprime integers $p\neq q>0$.
Writing $Q_{p,q}(t)$ as a quintic in $t^{2}$ produces an associated monic quintic polynomial.
After the weighted normalization $r=p/q$ and $s=r^{2}$ we obtain a one-parameter family $P_s(x)\in\mathbb{Q}[x]$ such that
\[
Q_{p,q}(t)=q^{20}\,P_s\!\left(\frac{t^{2}}{q^{4}}\right)\qquad\text{with}\qquad s=\left(\frac{p}{q}\right)^{2}.
\]

Assuming a quadratic divisor $x^{2}+ax+b$ with $a,b\in\mathbb{Q}$, we reduce divisibility of $P_s(x)$ to the vanishing of an explicit remainder
\[
R(x)=R_{1}(s,a,b)\,x+R_{0}(s,a,b).
\]
A key structural observation is that $R_1$ and $R_0$ are quadratic in $b$ and that, on the equation $R_1=0$, the second condition becomes linear in $b$.
This yields a one-direction elimination to a plane obstruction curve $F(s,a)=0$ with $F\in\mathbb{Z}[s,a]$, without any ``lifting-back'' issues: when the linear coefficient is nonzero, the parameter $b$ is forced to be the rational value $b=C/L$.
We isolate the degenerate locus $L=C=0$ and show it produces only $s=\pm 1$ (hence only $s=1$ in the cuboid domain $s>0$).

Let $\overline{C}\subset\mathbb{P}^{2}$ be the projective closure of $F(s,a)=0$.
Using \textsc{Magma} we perform a height-bounded search for rational points on $\overline{C}$.
With bound $H=10^{9}$, the search returns $8$ rational points, whose affine part has $s\in\{-1,0,1\}$.
In particular, no affine rational point with $s>0$ and $s\neq 1$ is found up to this bound.
This provides strong computational evidence that for rational $s>0$, $s\neq 1$, the quintic $P_s(x)$ admits no quadratic factor over $\mathbb{Q}$ (equivalently, no \emph{$2+3$} (quadratic--cubic) factorization over $\mathbb{Q}$), and yields a conditional exclusion assuming completeness of the rational-point enumeration on $\overline{C}$.

\medskip
\noindent{\bf Keywords:}
perfect cuboid; cuboid polynomials; factorization; resultants; rational points; plane curves; height search; \textsc{Magma}.

\smallskip
\noindent{\bf Mathematics Subject Classification:} 11D41, 11G30, 14H25, 12E05, 11Y16.
\end{abstract}

% ===================== Introduction =====================

\section{Introduction}\label{sec:intro}

The perfect cuboid problem asks for a rectangular box with integer edges such that all three face diagonals and the space diagonal are integers.
In a framework due to R.\,A.\,Sharipov \cite{Sharipov2011Cuboids,Sharipov2011Note}, one is led to explicit parameter-dependent even polynomials whose irreducibility is conjectured for coprime parameters.
In particular, Sharipov defines the \emph{second cuboid polynomial} $Q_{p,q}(t)\in\mathbb{Z}[t]$ of degree $10$ and formulates the conjecture that $Q_{p,q}(t)$ is irreducible over $\mathbb{Z}$ for coprime $p\neq q>0$.

\medskip
\noindent\textbf{Goal.}
We reduce the \emph{quadratic--cubic} ($2+3$) case for the normalized associated quintic $P_s(x)$ to rational points on an explicit plane curve $F(s,a)=0$, and we provide computational evidence (via a height-bounded search) that no such factorization occurs for $s\in\mathbb{Q}_{>0}$ with $s\neq 1$.

\medskip
\noindent\textbf{Method.}
We work directly with the divisibility condition by a generic quadratic $x^{2}+ax+b$.
The Euclidean remainder has the form $R_1x+R_0$; remarkably, $R_1$ and $R_0$ are quadratic in $b$, and on the locus $R_1=0$ the second equation becomes linear in $b$.
This yields an explicit plane obstruction curve $F(s,a)=0$ in the $(s,a)$-plane.
Determining all rational points on curves of genus $>1$ is a subtle Diophantine problem; finiteness is guaranteed by Faltings' theorem \cite{Faltings1983}, while explicit determination typically requires additional methods (e.g.\ Chabauty--Coleman and refinements) \cite{Chabauty1941,Coleman1985,Stoll2006}.
In this note we instead perform a height-bounded search for rational points on the projective closure $\overline{C}$ of $F(s,a)=0$ using \textsc{Magma}~\cite{Magma}.

% ===================== Polynomial and normalization =====================

\section{The second cuboid polynomial and its associated quintic}\label{sec:poly}

\subsection{Sharipov's second cuboid polynomial \texorpdfstring{$Q_{p,q}(t)$}{Qp,q(t)}}
Let $p,q\in\mathbb{Z}_{>0}$ be coprime and $p\neq q$.
The second cuboid polynomial is the even monic degree-$10$ polynomial
\begin{align}
Q_{p,q}(t)
={}&t^{10} + (2 q^{2} + p^{2})(3 q^{2} - 2 p^{2})\, t^{8} \nonumber\\
&+ (q^{8} + 10 p^{2} q^{6} + 4 p^{4} q^{4} - 14 p^{6} q^{2} + p^{8})\, t^{6} \nonumber\\
&- p^{2} q^{2}\,(q^{8} - 14 p^{2} q^{6} + 4 p^{4} q^{4} + 10 p^{6} q^{2} + p^{8})\, t^{4} \nonumber\\
&- p^{6} q^{6}\,(q^{2} + 2 p^{2})(-2 q^{2} + 3 p^{2})\, t^{2}
- p^{10} q^{10}\in\mathbb{Z}[t].\label{eq:Qpq}
\end{align}
This is the polynomial denoted $Q_{p,q}(t)$ in~\cite{Sharipov2011Cuboids,Sharipov2011Note}.

\subsection{Weighted normalization to \texorpdfstring{$Q_r(u)$}{Qr(u)}}
The polynomial \eqref{eq:Qpq} is weighted-homogeneous of total weight $20$ for
\[
\deg(p)=\deg(q)=1,\qquad \deg(t)=2,
\]
hence one may normalize to a one-parameter family.

\begin{lemma}[Normalization]\label{lem:normalize}
Let $q\neq 0$ and set
\[
r:=\frac{p}{q}\in\mathbb{Q},\qquad u:=\frac{t}{q^{2}}\in\mathbb{Q}.
\]
Then
\begin{equation}\label{eq:normalize}
Q_{p,q}(t)=q^{20}\,Q_r(u),
\end{equation}
where
\begin{equation}\label{eq:Qr}
\begin{aligned}
Q_r(u)
={}& u^{10} + (2+r^2)(3-2r^2)\,u^{8} + \bigl(1+10r^2+4r^4-14r^6+r^8\bigr)\,u^{6}\\
& - r^2\bigl(1-14r^2+4r^4+10r^6+r^8\bigr)\,u^{4}
 - r^6(1+2r^2)(-2+3r^2)\,u^{2}
 - r^{10}\in\mathbb{Q}[u].
\end{aligned}
\end{equation}
\end{lemma}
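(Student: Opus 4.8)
The plan is to derive \eqref{eq:normalize} as a formal polynomial identity flowing from the weighted-homogeneity already asserted, so I would first pin that property down precisely and then exploit it either by scaling or by direct substitution. Note at the outset that neither coprimality nor integrality of $p,q$ plays any role: the claimed equality is an identity of rational functions in $p,q,t$ valid for all $q\neq0$, so I am free to manipulate it formally.

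First I would verify the homogeneity claim itself, since it underlies everything. With $\deg(p)=\deg(q)=1$ and $\deg(t)=2$, a monomial $p^{a}q^{b}t^{c}$ carries weight $a+b+2c$, and I would check line by line in \eqref{eq:Qpq} that every term has weight $20$: the leading $t^{10}$ gives $20$; the $t^{8}$ coefficient is a product of two quadratics, of degree $4=20-16$; the $t^{6}$ coefficient is an octic, degree $8=20-12$; the $t^{4}$ coefficient is $p^{2}q^{2}$ times an octic, degree $12=20-8$; the $t^{2}$ coefficient is $p^{6}q^{6}$ times two quadratics, degree $16=20-4$; and the constant $p^{10}q^{10}$ has degree $20$. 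In each case the coefficient degree $20-4k$ exactly compensates the weight $4k$ of the factor $t^{2k}$.

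Granting homogeneity, the cleanest route is a one-line scaling argument. Weighted-homogeneity of weight $20$ means $Q_{\lambda p,\lambda q}(\lambda^{2}t)=\lambda^{20}Q_{p,q}(t)$ for every $\lambda$; taking $\lambda=1/q$ (allowed since $q\neq0$) yields $Q_{p/q,1}(t/q^{2})=q^{-20}Q_{p,q}(t)$, i.e.\ $Q_{p,q}(t)=q^{20}Q_{p/q,1}(t/q^{2})$. Setting $r=p/q$ and $u=t/q^{2}$ this is exactly \eqref{eq:normalize}, once I identify $Q_r(u)$ with $Q_{r,1}(u)$. To close the identification I would simply put $q=1$, $p=r$, $t=u$ in \eqref{eq:Qpq} and read off that the six coefficients collapse to the six coefficients displayed in \eqref{eq:Qr}, matching line for line. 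As an independent cross-check I would instead substitute $p=rq$, $t=uq^{2}$ directly into \eqref{eq:Qpq}: each coefficient factors as $q^{20-4k}$ times its value at $q=1$, while $t^{2k}=u^{2k}q^{4k}$, so every term carries precisely $q^{20}$, and factoring it out reproduces \eqref{eq:Qr}.

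Since this is an identity of polynomials, there is no genuine mathematical obstacle; the only thing to watch — what I would call the ``hard part'' only in the bookkeeping sense — is the arithmetic of exponents, ensuring that the coefficient degree $20-4k$ and the factor $q^{4k}$ coming from $t^{2k}$ always combine to $q^{20}$ with no residual power of $q$ left over. Because the scaling argument and the explicit term-by-term substitution corroborate each other, I would regard the proof as complete once the six coefficients in \eqref{eq:Qr} have been matched.
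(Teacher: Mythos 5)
Your proof is correct, and in fact it contains the paper's proof verbatim as its ``cross-check'': the paper's entire argument is the one sentence ``Substitute $p=rq$ and $t=q^{2}u$ into \eqref{eq:Qpq} and factor out $q^{20}$.'' Your primary route is a different packaging of the same computation: you first verify the weighted-homogeneity assertion term by term (each coefficient of $t^{2k}$ has degree $20-4k$ in $p,q$), then invoke the scaling identity $Q_{\lambda p,\lambda q}(\lambda^{2}t)=\lambda^{20}Q_{p,q}(t)$ with $\lambda=1/q$, and finally identify $Q_r(u)$ with $Q_{r,1}(u)$ by reading \eqref{eq:Qpq} at $q=1$, $p=r$, $t=u$ against \eqref{eq:Qr}. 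What this buys is conceptual clarity — the lemma is exposed as nothing but the scaling property specialized at $\lambda=1/q$, and the formal step is cleanly separated from the bookkeeping — plus the explicit (and correct) remark that coprimality and integrality of $p,q$ are irrelevant. What it does not buy is any saving in work: verifying the weight of every term of \eqref{eq:Qpq} is exactly the same exponent arithmetic that makes the direct substitution go through, so the two arguments are the same calculation wearing different clothes. Either version is a complete proof.
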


\begin{proof}
Substitute $p=rq$ and $t=q^2u$ into \eqref{eq:Qpq} and factor out $q^{20}$.
\end{proof}

\subsection{The associated quintic \texorpdfstring{$P_s(x)$}{Ps(x)}}
Since $Q_r(u)$ is even, it is a quintic in $x=u^2$.

\begin{definition}[Second cuboid quintic]\label{def:Ps}
Let $s:=r^2\in\mathbb{Q}_{\ge 0}$.
Define $P_s(x)\in\mathbb{Q}[x]$ by the identity
\begin{equation}\label{eq:QrPs}
Q_r(u)=P_s(u^2).
\end{equation}
Equivalently, $P_s(x)$ is the monic quintic
\begin{equation}\label{eq:Ps}
\begin{aligned}
P_s(x)
={}&x^{5} + (2+s)(3-2s)\,x^{4} + (1+10s+4s^{2}-14s^{3}+s^{4})\,x^{3}\\
& - s(1-14s+4s^{2}+10s^{3}+s^{4})\,x^{2}
 - s^{3}(1+2s)(-2+3s)\,x
 - s^{5}.
\end{aligned}
\end{equation}
\end{definition}

\begin{proof}[Derivation]
Substitute $x=u^2$ into \eqref{eq:Qr} and set $s=r^2$.
\end{proof}

\begin{definition}[$2+3$ factorization]\label{def:23}
Let $K$ be a field of characteristic $0$.
We say that a monic quintic $P(x)\in K[x]$ admits a \emph{$2+3$ factorization over $K$} if it is divisible in $K[x]$ by a quadratic polynomial (equivalently, $P(x)=D(x)H(x)$ with $\deg D=2$ and $\deg H=3$).
\end{definition}

\begin{remark}\label{rem:domain}
In the original cuboid setting we have $p,q>0$, hence $r>0$ and $s=r^2\in\mathbb{Q}_{>0}$.
Moreover $p\neq q$ is equivalent to $r\neq 1$, i.e.\ $s\neq 1$.
\end{remark}

\begin{lemma}[Quadratic factors and even quartic factors]\label{lem:quartic}
Let $s=r^{2}$ with $r=p/q\in\mathbb{Q}$ and $q\neq 0$.
If $P_s(x)$ is divisible in $\mathbb{Q}[x]$ by $x^{2}+ax+b$ with $a,b\in\mathbb{Q}$, then $Q_{p,q}(t)$ is divisible in $\mathbb{Q}[t]$ by the even quartic polynomial
\[
t^{4}+a q^{4}t^{2}+b q^{8}.
\]
\end{lemma}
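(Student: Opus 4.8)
The plan is to reduce the claim to a straightforward substitution argument that reverses the normalization established in Lemma~\ref{lem:normalize} and Definition~\ref{def:Ps}. The key identities are $Q_{p,q}(t)=q^{20}Q_r(u)$ with $u=t/q^2$, together with $Q_r(u)=P_s(u^2)$ where $s=r^2=(p/q)^2$. Combining these gives the relation stated in the abstract, namely $Q_{p,q}(t)=q^{20}P_s(t^2/q^4)$. So the entire problem is to track how a quadratic factor of $P_s$ in the variable $x$ transforms under the composite change of variables $x=t^2/q^4$.

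First I would assume the hypothesis: $P_s(x)=(x^2+ax+b)\,H(x)$ for some monic cubic $H(x)\in\mathbb{Q}[x]$. Next I would substitute $x=t^2/q^4$ into this factorization. The quadratic factor becomes
\[
\left(\frac{t^2}{q^4}\right)^2+a\,\frac{t^2}{q^4}+b=\frac{t^4+a q^4 t^2+b q^8}{q^8},
\]
which, after clearing the denominator $q^8$, is exactly the even quartic $t^4+aq^4t^2+bq^8$ claimed in the statement (up to the explicit power of $q$). Then I would multiply through by $q^{20}$ and use the identity $Q_{p,q}(t)=q^{20}P_s(t^2/q^4)$ to write $Q_{p,q}(t)$ as the product of this even quartic with an appropriately rescaled cubic-in-$t^2$ factor coming from $H(t^2/q^4)$.

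The only bookkeeping point is to distribute the $q^{20}$ factor correctly so that both resulting factors have rational coefficients: the quartic absorbs $q^8$ and the degree-$6$ complementary factor (which is $q^{12}H(t^2/q^4)$, a polynomial in $t^2$) absorbs the remaining $q^{12}$. Since $a,b\in\mathbb{Q}$ and the coefficients of $H$ lie in $\mathbb{Q}$, and since $q\in\mathbb{Z}\setminus\{0\}$, both factors manifestly lie in $\mathbb{Q}[t]$. This exhibits $t^4+aq^4t^2+bq^8$ as a genuine divisor of $Q_{p,q}(t)$ in $\mathbb{Q}[t]$, completing the argument.

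I do not anticipate any real obstacle here: the result is essentially a formal consequence of the weighted-homogeneity normalization, and the substitution $x=u^2=t^2/q^4$ is a ring homomorphism $\mathbb{Q}[x]\to\mathbb{Q}[t]$ that sends divisibility to divisibility. The one place requiring a little care is verifying that the complementary degree-$6$ factor is genuinely a polynomial with rational coefficients rather than merely a rational function; this follows immediately once one checks that $H(t^2/q^4)$ multiplied by the appropriate power of $q$ clears all denominators, which it does because $H$ has degree $3$ in $x=t^2/q^4$ and $q^{12}$ exactly cancels the resulting $q^{-12}$.
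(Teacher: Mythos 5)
Your proof is correct and follows essentially the same route as the paper: both track the quadratic factor through the normalization substitution and clear denominators by distributing the power of $q$ (the paper does this in two steps via $Q_r(u)$, while you compose them into the single substitution $x=t^2/q^4$, which is an immaterial difference). The denominator-clearing bookkeeping you flag ($q^{12}H(t^2/q^4)\in\mathbb{Q}[t]$) is exactly the point the paper's terse proof leaves implicit, and you verify it correctly.
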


\begin{proof}
If $(x^{2}+ax+b)\mid P_s(x)$ then $(u^{4}+a u^{2}+b)\mid P_s(u^{2})=Q_r(u)$.
Substituting $u=t/q^{2}$ and multiplying by $q^{8}$ yields $(t^{4}+a q^{4}t^{2}+b q^{8})\mid Q_{p,q}(t)$ by \eqref{eq:normalize}.
\end{proof}

% ===================== Remainder equations =====================

\section{Quadratic divisors and explicit remainder equations}\label{sec:remainder}

Fix $s\in\mathbb{Q}$ and consider a generic monic quadratic
\begin{equation}\label{eq:D}
D(x):=x^{2}+ax+b,\qquad a,b\in\mathbb{Q}.
\end{equation}
Write the Euclidean division in $\mathbb{Q}(s,a,b)[x]$ as
\begin{equation}\label{eq:Quotrem}
P_s(x)=Q(x)\,D(x)+R(x),\qquad \deg R<2,
\end{equation}
so that $R(x)=R_{1}(s,a,b)\,x+R_{0}(s,a,b)$.
Then $D\mid P_s$ is equivalent to $R_1=R_0=0$.

\begin{lemma}[Explicit remainder]\label{lem:Rvectors}
Let $P_s(x)$ be as in \eqref{eq:Ps} and let $D(x)=x^{2}+ax+b$.
Then the remainder in \eqref{eq:Quotrem} is
\begin{equation}\label{eq:Rform}
R(x)=\bigl(b^{2}+u(s,a)\,b+v(s,a)\bigr)\,x+\bigl(m(s,a)\,b^{2}+n(s,a)\,b-s^{5}\bigr),
\end{equation}
where $u,v,m,n\in\mathbb{Z}[s,a]$ are given explicitly by
\begin{align}
u(s,a)={}&-3a^{2}+(12-4s^{2}-2s)a+(-s^{4}+14s^{3}-4s^{2}-10s-1),\label{eq:u}\\[2pt]
v(s,a)={}&a^{4}+(2s^{2}+s-6)a^{3}+(s^{4}-14s^{3}+4s^{2}+10s+1)a^{2}\nonumber\\
&\qquad+(s^{5}+10s^{4}+4s^{3}-14s^{2}+s)a+(-6s^{5}+s^{4}+2s^{3}),\label{eq:v}\\[2pt]
m(s,a)={}&-2a-2s^{2}-s+6,\label{eq:m}\\[2pt]
n(s,a)={}&a^{3}+(2s^{2}+s-6)a^{2}+(s^{4}-14s^{3}+4s^{2}+10s+1)a\nonumber\\
&\qquad+(s^{5}+10s^{4}+4s^{3}-14s^{2}+s).\label{eq:n}
\end{align}
\end{lemma}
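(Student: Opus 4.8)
The plan is to compute the remainder $R(x)$ directly, by iteratively reducing powers of $x$ modulo $D(x)=x^{2}+ax+b$ in the ring $\mathbb{Q}(s,a,b)[x]/(D(x))$. The defining relation is $x^{2}\equiv -ax-b$, and multiplying through by $x$ repeatedly expresses each higher power as a $\mathbb{Z}[a,b]$-linear form in $1$ and $x$. Concretely, I would establish that $x^{k}\equiv \alpha_{k}\,x+\beta_{k}$ with $\alpha_{k},\beta_{k}\in\mathbb{Z}[a,b]$, computed by the recursion $\alpha_{k+1}=\beta_{k}-a\alpha_{k}$, $\beta_{k+1}=-b\alpha_{k}$ starting from $(\alpha_{0},\beta_{0})=(0,1)$. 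Carrying this out through $k=5$ gives the closed forms $\alpha_{2}=-a$, $\beta_{2}=-b$; $\alpha_{3}=a^{2}-b$, $\beta_{3}=ab$; $\alpha_{4}=-a^{3}+2ab$, $\beta_{4}=b^{2}-a^{2}b$; and $\alpha_{5}=a^{4}-3a^{2}b+b^{2}$, $\beta_{5}=a^{3}b-2ab^{2}$.

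Next, writing $P_{s}(x)=x^{5}+c_{4}x^{4}+c_{3}x^{3}+c_{2}x^{2}+c_{1}x+c_{0}$ with the $c_{k}\in\mathbb{Z}[s]$ read off from \eqref{eq:Ps} (and $c_{5}=1$), linearity of reduction gives $R(x)=\bigl(\sum_{k}c_{k}\alpha_{k}\bigr)x+\bigl(\sum_{k}c_{k}\beta_{k}\bigr)$, so that $R_{1}=\sum_{k}c_{k}\alpha_{k}$ and $R_{0}=\sum_{k}c_{k}\beta_{k}$. The structurally important point — and the one the rest of the paper exploits — is the $b$-degree bookkeeping: among the $\alpha_{k}$ only $\alpha_{5}$ is quadratic in $b$, with leading coefficient $1$, so $R_{1}$ is a \emph{monic} quadratic $b^{2}+u\,b+v$; among the $\beta_{k}$ only $\beta_{4}$ and $\beta_{5}$ are quadratic in $b$, so $R_{0}$ is quadratic in $b$ as well. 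Collecting these expressions by powers of $b$ and substituting the explicit $c_{k}$ then yields the stated polynomials $u,v,m,n$; for instance the $b^{2}$-coefficient of $R_{0}$ is $c_{4}-2a=-2a-2s^{2}-s+6=m$, matching \eqref{eq:m}.

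The proof involves no genuine mathematical obstacle: it is a finite, deterministic symbolic computation, and the only real risk is arithmetic bookkeeping across the many $\mathbb{Z}[s,a]$-coefficients. To guard against sign and transcription errors I would cross-check independently at the specialization $b=0$, where $D=x(x+a)$ forces the remainder to interpolate $P_{s}$ at $x=0$ and $x=-a$: this gives $R_{0}|_{b=0}=P_{s}(0)=c_{0}=-s^{5}$, matching the constant term of the stated $R_{0}$, and $R_{1}|_{b=0}=\bigl(P_{s}(-a)-P_{s}(0)\bigr)/(-a)=v(s,a)$, confirming \eqref{eq:v}. A parallel direct expansion in a computer algebra system provides a final verification of all four coefficient polynomials $u,v,m,n$.
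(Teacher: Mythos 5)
Your proposal is correct and follows essentially the same route as the paper: reduce powers of $x$ modulo $D(x)$ via $x^{2}\equiv -ax-b$ in the quotient ring and collect coefficients in $b$, with a CAS cross-check as backup. Your version is simply more explicit — the recursion $\alpha_{k+1}=\beta_{k}-a\alpha_{k}$, $\beta_{k+1}=-b\alpha_{k}$, the closed forms through $k=5$, and the $b=0$ interpolation check are all correct and confirm the stated $u,v,m,n$ — where the paper instead delegates the bookkeeping to \textsc{Magma}'s \texttt{Quotrem}.
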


\begin{proof}
In the quotient ring $\mathbb{Q}[s,a,b][x]/(x^{2}+ax+b)$ we have $x^{2}\equiv-ax-b$, hence every power $x^{k}$ reduces to a linear expression in $x$.
Reducing each term of $P_s(x)$ modulo $D(x)$ yields a remainder of the form \eqref{eq:Rform}, and collecting coefficients gives \eqref{eq:u}--\eqref{eq:n}.
For completeness and reproducibility, Script~\ref{scr:magma} in Appendix~\ref{app:scripts} cross-checks \eqref{eq:Rform} against \textsc{Magma}'s \texttt{Quotrem} computation.
\end{proof}

% ===================== One-direction elimination =====================

\section{Linearization in \texorpdfstring{$b$}{b} and an obstruction curve}\label{sec:obstruction}

\subsection{Linearization on the locus \texorpdfstring{$R_1=0$}{R1=0}}

Let
\begin{equation}\label{eq:R1R0}
R_1(s,a,b):=b^{2}+u(s,a)\,b+v(s,a),\qquad
R_0(s,a,b):=m(s,a)\,b^{2}+n(s,a)\,b-s^{5}.
\end{equation}

\begin{lemma}[One-direction reduction]\label{lem:linearize}
Define
\begin{equation}\label{eq:LC}
L(s,a):=n(s,a)-m(s,a)\,u(s,a),\qquad C(s,a):=m(s,a)\,v(s,a)+s^{5}.
\end{equation}
Then, on the equation $R_1(s,a,b)=0$, we have
\begin{equation}\label{eq:R0linear}
R_0(s,a,b)=L(s,a)\,b-C(s,a).
\end{equation}
\end{lemma}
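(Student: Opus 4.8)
The plan is to produce the claimed identity by eliminating the $b^{2}$-term of $R_0$ using a single $b$-independent multiple of $R_1$. Observe from \eqref{eq:R1R0} that $R_1$ is monic in $b$ (leading term $b^{2}$), while $R_0$ has $b^{2}$-coefficient $m(s,a)$. The natural combination to form is therefore $R_0 - m\,R_1$: it is engineered so that the two $b^{2}$ contributions cancel \emph{identically} in $\mathbb{Z}[s,a,b]$, before any relation is imposed. This is cleaner than solving $R_1=0$ for $b^{2}=-u\,b-v$ and back-substituting, though the two routes are equivalent.

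Concretely, I would compute
\begin{align*}
R_0 - m\,R_1
&= \bigl(m\,b^{2} + n\,b - s^{5}\bigr) - m\bigl(b^{2} + u\,b + v\bigr)\\
&= (n - m\,u)\,b + (-s^{5} - m\,v).
\end{align*}
By the definitions in \eqref{eq:LC}, the linear coefficient is exactly $L(s,a)$ and the constant term is $-(m\,v + s^{5}) = -C(s,a)$. Hence $R_0 - m\,R_1 = L\,b - C$ holds as a polynomial identity, with no constraint whatsoever on $(s,a,b)$.

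Finally, restricting to the locus $R_1(s,a,b)=0$ annihilates the subtracted term, leaving $R_0 = L\,b - C$, which is \eqref{eq:R0linear}. The argument is entirely elementary; the only points needing care are confirming that the multiplier $m$ makes the $b^{2}$-terms cancel (immediate from $R_1$ being monic in $b$) and tracking signs so that $-s^{5}-m\,v$ is correctly read off as $-C$. There is no genuine obstacle here: the substance of the lemma is simply the \emph{existence} of this one-step elimination, which is precisely what permits the later passage to the plane obstruction curve $F(s,a)=0$ and the forcing $b=C/L$ when $L\neq 0$.
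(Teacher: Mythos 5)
Your proof is correct and is essentially the paper's own argument in a slightly different guise: subtracting $m\,R_1$ from $R_0$ is exactly the same computation as the paper's substitution of $b^{2}=-u\,b-v$ into $R_0$, and both yield $(n-mu)\,b-(mv+s^{5})=L\,b-C$ in one line. The only (cosmetic) difference is that you first record the unconditional polynomial identity $R_0-m\,R_1=L\,b-C$ and then restrict to the locus $R_1=0$, whereas the paper performs the elimination directly on that locus.
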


\begin{proof}
If $R_1=0$ then $b^{2}=-u\,b-v$. Substituting this into $R_0=m b^{2}+n b-s^{5}$ gives
\[
R_0=m(-u b-v)+n b-s^{5}=(n-mu)b-(mv+s^{5})=L b-C,
\]
which is \eqref{eq:R0linear}.
\end{proof}

\subsection{The obstruction polynomial \texorpdfstring{$F(s,a)$}{F(s,a)}}

\begin{definition}[Obstruction polynomial]\label{def:Fsa}
Let $u,v,m,n,L,C$ be as in \eqref{eq:u}--\eqref{eq:n} and \eqref{eq:LC}.
Define
\begin{equation}\label{eq:Fsa}
F(s,a):=C(s,a)^{2}+u(s,a)\,L(s,a)\,C(s,a)+v(s,a)\,L(s,a)^{2}\in\mathbb{Z}[s,a].
\end{equation}
\end{definition}

\begin{proposition}[Divisibility criterion]\label{prop:criterion}
Fix $s\in\mathbb{Q}$.
The following are equivalent:
\begin{enumerate}
\item $P_s(x)$ admits a $2+3$ factorization over $\mathbb{Q}$;
\item there exist $a,b\in\mathbb{Q}$ such that $R_1(s,a,b)=R_0(s,a,b)=0$.
\end{enumerate}
Moreover, if such $a,b$ exist then either
\begin{enumerate}
\item[(A)] $L(s,a)\neq 0$ and $F(s,a)=0$, in which case necessarily $b=C(s,a)/L(s,a)$; or
\item[(B)] $L(s,a)=0$ and $C(s,a)=0$ (the \emph{degenerate locus}).
\end{enumerate}
\end{proposition}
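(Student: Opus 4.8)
The plan is to treat the equivalence (1) $\Leftrightarrow$ (2) as essentially a restatement of the definitions, and then to extract the dichotomy from Lemma~\ref{lem:linearize} by a single case split. For (2) $\Rightarrow$ (1): if $R_1(s,a,b)=R_0(s,a,b)=0$ for some $a,b\in\mathbb{Q}$, then the remainder $R(x)$ in \eqref{eq:Quotrem} vanishes, so $D(x)=x^{2}+ax+b$ divides $P_s(x)$ in $\mathbb{Q}[x]$; since $P_s$ is monic of degree $5$, the quotient is a monic cubic and we obtain a $2+3$ factorization. For (1) $\Rightarrow$ (2): I would first reduce to a \emph{monic} quadratic divisor. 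Any degree-$2$ divisor of $P_s$ over $\mathbb{Q}$ (in the sense of Definition~\ref{def:23}) can be divided by its nonzero rational leading coefficient to produce a monic divisor $x^{2}+ax+b$ with $a,b\in\mathbb{Q}$; its Euclidean remainder is zero, which by Lemma~\ref{lem:Rvectors} is exactly $R_1=R_0=0$.

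For the ``moreover'' assertion, suppose $a,b\in\mathbb{Q}$ satisfy $R_1=R_0=0$. The crucial input is Lemma~\ref{lem:linearize}: since $R_1=0$, the second equation reduces to $R_0=L(s,a)\,b-C(s,a)$, so the vanishing of $R_0$ gives the single relation $L(s,a)\,b=C(s,a)$. I then split on whether $L(s,a)=0$. If $L(s,a)\neq 0$, this relation forces $b=C(s,a)/L(s,a)$, the asserted value; if $L(s,a)=0$, then $C(s,a)=L(s,a)\,b=0$, placing us on the degenerate locus (B).

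To finish case (A), I substitute $b=C/L$ back into the still-valid equation $R_1=b^{2}+u\,b+v=0$ and clear the denominator $L^{2}$, obtaining $C^{2}+u\,L\,C+v\,L^{2}=0$, which is exactly $F(s,a)$ by \eqref{eq:Fsa}; hence $F(s,a)=0$. The argument is essentially bookkeeping over the earlier lemmas, so I do not expect a genuine obstacle. The only points demanding care are the monic reduction in (1) $\Rightarrow$ (2) (immediate because $\mathbb{Q}$ is a field, so rescaling preserves divisibility and keeps $a,b$ rational) and the discipline of feeding the substitution $b=C/L$ back into $R_1=0$ rather than into the already-exploited $R_0=0$. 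All the genuinely hard mathematics---finiteness and explicit enumeration of rational points on $\overline{C}$---lives downstream of this proposition.
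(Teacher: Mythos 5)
Your proposal is correct and follows essentially the same route as the paper: both prove the equivalence via vanishing of the Euclidean remainder, then use Lemma~\ref{lem:linearize} to rewrite $R_0=Lb-C$, split on whether $L$ vanishes, and in the nondegenerate case substitute $b=C/L$ into $R_1=0$ and clear denominators to obtain $F(s,a)=0$. The only difference is that you spell out the rescaling to a monic quadratic divisor in (1)~$\Rightarrow$~(2), a detail the paper's proof treats as immediate.
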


\begin{proof}
The equivalence (1)$\Leftrightarrow$(2) is immediate from $D\mid P_s$ $\Leftrightarrow$ remainder $R$ vanishes identically, i.e.\ $R_1=R_0=0$.

Assume $R_1=R_0=0$. By Lemma~\ref{lem:linearize} we have $R_0=L b-C$, hence either $L\neq 0$ and $b=C/L$, or $L=0$ and $C=0$.
If $L\neq 0$, substituting $b=C/L$ into $R_1=b^{2}+u b+v=0$ and clearing denominators gives $F(s,a)=0$ by \eqref{eq:Fsa}.
\end{proof}

\begin{proposition}[Degrees and the special fiber $s=1$]\label{prop:degrees}
The polynomial $F(s,a)\in\mathbb{Z}[s,a]$ satisfies
\[
\deg_{s}F=16,\qquad \deg_{a}F=10.
\]
Moreover,
\[
F(1,a)=-a^{4}(a-2)^{6}.
\]
\end{proposition}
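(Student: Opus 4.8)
The plan is to avoid working with the defining expression $C^{2}+uLC+vL^{2}$ head-on, and instead first recognize $F$ as the resultant in $b$ of the two quadratics $R_{1}=b^{2}+ub+v$ and $R_{0}=mb^{2}+nb-s^{5}$. Writing their coefficient vectors as $(A_{2},A_{1},A_{0})=(1,u,v)$ and $(B_{2},B_{1},B_{0})=(m,n,-s^{5})$, the standard resultant formula for two quadratics gives $\mathrm{Res}_{b}(R_{1},R_{0})=(A_{2}B_{0}-A_{0}B_{2})^{2}-(A_{2}B_{1}-A_{1}B_{2})(A_{1}B_{0}-A_{0}B_{1})=(-C)^{2}-L(-us^{5}-vn)=C^{2}+L(us^{5}+vn)$, using $A_{2}B_{0}-A_{0}B_{2}=-s^{5}-mv=-C$ and $A_{2}B_{1}-A_{1}B_{2}=n-mu=L$. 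To identify this with $F$ I would subtract: $F-\mathrm{Res}_{b}(R_{1},R_{0})=uLC+vL^{2}-L(us^{5}+vn)=L\bigl[u(C-s^{5})+v(L-n)\bigr]$, and then apply the definitions $C-s^{5}=mv$ and $L-n=-mu$ to obtain $u\cdot mv+v\cdot(-mu)=0$. Hence $F=C^{2}+L(us^{5}+vn)$ as an identity in $\mathbb{Z}[s,a]$. This reformulation is the crucial move, because it exhibits the true top-degree behaviour without the spurious cancellation hidden in the original form.

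For $\deg_{a}F$ I would read off the leading $a$-terms from the explicit coefficients: $u\sim-3a^{2}$, $v\sim a^{4}$, $m\sim-2a$, $n\sim a^{3}$, whence $L=n-mu\sim-5a^{3}$ and $C=mv+s^{5}\sim-2a^{5}$ (all leading $a$-coefficients being constants, independent of $s$). In the resultant form the summands $C^{2}$ and $L\cdot vn$ each have $a$-degree $10$, with leading coefficients $(-2)^{2}=4$ and $(-5)\cdot1\cdot1=-5$, while $L\cdot us^{5}$ has $a$-degree only $5$. The $a^{10}$-coefficient of $F$ is therefore $4-5=-1\neq0$, so $\deg_{a}F=10$.

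The step I expect to be the genuine obstacle is $\deg_{s}F$, and here the payoff of the resultant form appears. Collecting top $s$-terms gives $u\sim-s^{4}$, $v\sim(a-6)s^{5}$, $n\sim s^{5}$, $m\sim-2s^{2}$, hence $L\sim-2s^{6}$. Then $C^{2}$ has $s$-degree $14$, the term $L\cdot us^{5}$ has $s$-degree $6+9=15$, and $L\cdot vn$ has $s$-degree $6+10=16$ with leading coefficient $(-2)\cdot(a-6)\cdot1=-2(a-6)$. Since $-2(a-6)$ is a nonzero polynomial in $a$, the $s^{16}$-coefficient of $F$ does not vanish and $\deg_{s}F=16$. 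I would emphasize that the original expression $C^{2}+uLC+vL^{2}$ \emph{appears} to have $s$-degree $17$, with both $uLC$ and $vL^{2}$ reaching $s^{17}$; but their leading contributions $-4(a-6)s^{17}$ and $+4(a-6)s^{17}$ cancel exactly, and verifying this cancellation by hand and then extracting the surviving $s^{16}$-coefficient is precisely the delicate computation that the resultant identity lets one bypass.

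Finally, for the special fibre $s=1$ I would substitute $s=1$ into the explicit $u,v,m,n$, obtaining $u=-3a^{2}+6a-2$, $v=a^{4}-3a^{3}+2a^{2}+2a-3$, $m=-2a+3$, $n=a^{3}-3a^{2}+2a+2$, and then $L(1,a)=-5a^{3}+18a^{2}-20a+8$, $C(1,a)=-2a^{5}+9a^{4}-13a^{3}+2a^{2}+12a-8$. Both vanish at $a=2$ and only to first order, so writing $L=(a-2)\ell$ and $C=(a-2)c$ factors $F(1,a)=(a-2)^{2}\bigl(c^{2}+u\ell c+v\ell^{2}\bigr)$; it then remains to verify the degree-$8$ identity $c^{2}+u\ell c+v\ell^{2}=-a^{4}(a-2)^{4}$, which I would confirm by direct expansion or by matching values at nine points, yielding $F(1,a)=-a^{4}(a-2)^{6}$. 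As a conceptual cross-check, since $P_{1}(x)=(x-1)(x+1)^{4}$, its only quadratic divisors over $\mathbb{Q}$ are $x^{2}-1$ and $(x+1)^{2}$, i.e.\ $(a,b)=(0,-1)$ and $(2,1)$; these are exactly the roots $a=0$ and $a=2$ of $F(1,a)$, and the repeated-root structure of $P_{1}$ is what forces the high multiplicities $4$ and $6$.
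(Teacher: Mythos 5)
Your proposal is correct, but it proves the proposition by a genuinely different route than the paper. The paper's own proof is purely computational: both degree claims and the identity $F(1,a)=-a^{4}(a-2)^{6}$ are delegated to the \textsc{Magma} transcript (Script~\ref{scr:magma}, Steps~1 and~3), with only the conceptual remark that the $s=1$ fiber reflects $P_{1}(x)=(x-1)(x+1)^{4}$. Your central move --- the hand derivation of $F=\mathrm{Res}_{b}(R_{1},R_{0})=C^{2}+L(us^{5}+vn)$ from $C-s^{5}=mv$ and $L-n=-mu$ --- occurs in the paper only as a machine cross-check (\texttt{Check Resultant == F ? true}, Step~2), never as a proof device. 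It is exactly the right tool here: in the defining form $C^{2}+uLC+vL^{2}$ the apparent $s^{17}$ contributions $-4(a-6)s^{17}$ from $uLC$ and $+4(a-6)s^{17}$ from $vL^{2}$ cancel (your computation of this cancellation checks out), whereas in the resultant form the summand $Lvn$ has the visibly unmatched leading term $-2(a-6)s^{16}$, giving $\deg_{s}F=16$; similarly $4a^{10}-5a^{10}=-a^{10}$ gives $\deg_{a}F=10$ with leading coefficient $-1$, consistent with the fiber formula. I checked your specializations $u(1,a),v(1,a),m(1,a),n(1,a)$, the polynomials $L(1,a)=-5a^{3}+18a^{2}-20a+8$ and $C(1,a)=-2a^{5}+9a^{4}-13a^{3}+2a^{2}+12a-8$, the cofactors $\ell=-5a^{2}+8a-4$ and $c=-2a^{4}+5a^{3}-3a^{2}-4a+4$, and the residual identity $c^{2}+u\ell c+v\ell^{2}=-a^{4}(a-2)^{4}$ at several values of $a$; all are correct, and since both sides of that identity have degree at most $8$, verifying it at nine rational points (or by direct expansion) is a legitimate finite check. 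In exchange for more hand computation, your argument makes the proposition human-verifiable and explains \emph{why} the $s$-degree is $16$ rather than the naive $17$; the paper's approach buys brevity and reproducibility, but leaves that degree drop as an unexplained output of the computer algebra system.
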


\begin{proof}
This is verified by \textsc{Magma} in Appendix~\ref{app:scripts} (see the transcript in Script~\ref{scr:magma}).
The specialization at $s=1$ reflects the factorization
\[
P_{1}(x)=(x-1)(x+1)^{4},
\]
so that both $x^{2}-1$ (corresponding to $a=0$) and $(x+1)^{2}$ (corresponding to $a=2$) divide $P_1(x)$.
\end{proof}

% ===================== Degenerate locus =====================

\section{The degenerate locus \texorpdfstring{$L=C=0$}{L=C=0}}\label{sec:degenerate}

The reduction in Proposition~\ref{prop:criterion} is one-direction and produces the explicit plane curve $F(s,a)=0$ when $L\neq 0$.
The remaining possibility is the degenerate locus $L=C=0$.

\begin{proposition}[Degenerate locus classification]\label{prop:degenerate}
The system
\[
L(s,a)=0,\qquad C(s,a)=0
\]
has the following rational solutions:
\[
(s,a)=(1,2)\quad\text{and}\quad (s,a)=(-1,2).
\]
In particular, for $s\in\mathbb{Q}_{>0}$ with $s\neq 1$ the degenerate locus is empty.
\end{proposition}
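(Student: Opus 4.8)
The plan is to treat $L(s,a)=C(s,a)=0$ as a zero-dimensional system and to enumerate its rational points by one-variable elimination. First I would record the structural relations hidden in Lemma~\ref{lem:Rvectors}: setting $\mu(s):=2s^{2}+s-6$ and $\pi(s):=-6s^{5}+s^{4}+2s^{3}$, a direct comparison of \eqref{eq:u}--\eqref{eq:n} gives
\[
m=-(2a+\mu),\qquad u=-\frac{\partial n}{\partial a},\qquad v=a\,n+\pi .
\]
Consequently $L=n-(2a+\mu)\,\partial_a n$ and $C=-(2a+\mu)\bigl(a\,n+\pi\bigr)+s^{5}$, which keeps the elimination manageable and supplies several independent cross-checks.

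Next I would eliminate $a$ by forming $g(s):=\mathrm{Res}_{a}\bigl(L,C\bigr)\in\mathbb{Z}[s]$. Since the leading coefficients of $L$ and $C$ in $a$ are the nonzero constants $-5$ and $-2$, the $a$-degrees never drop; hence $g\not\equiv 0$, so $L$ and $C$ share no common factor in $\mathbb{Q}[s,a]$ (a factor depending only on $s$ would have to divide both constants $-5,-2$), confirming that the locus is finite. Moreover $g(s_{0})=0$ if and only if $L(s_{0},a)$ and $C(s_{0},a)$ have a common root $a\in\overline{\mathbb{Q}}$. In particular every rational point $(s_{0},a_{0})$ of the degenerate locus has $s_{0}$ a rational root of $g$.

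I would then factor $g(s)$ over $\mathbb{Q}$ and, for each rational root $s_{0}$, compute $d_{s_{0}}(a):=\gcd_{\mathbb{Q}[a]}\bigl(L(s_{0},a),C(s_{0},a)\bigr)$; the rational solutions lying over $s_{0}$ are exactly the rational roots of $d_{s_{0}}$. A direct check shows that $s_{0}=1$ and $s_{0}=-1$ both yield $a=2$, so $(1,2)$ and $(-1,2)$ satisfy $L=C=0$; the content of the proposition is that every remaining rational root of $g$ carries only irrational common roots $a$, or none at all. The symmetric elimination $h(a):=\mathrm{Res}_{s}(L,C)$, whose only relevant rational root should be $a=2$, provides an independent confirmation. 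Combining the two finite lists pins the rational solutions down to $(1,2)$ and $(-1,2)$; since $s=1$ is excluded by hypothesis and $s=-1<0$, the locus is empty for $s\in\mathbb{Q}_{>0}$ with $s\neq 1$.

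The main obstacle is purely computational: the resultant $g(s)$ is of large degree in $s$ (Proposition~\ref{prop:degrees} already exhibits $\deg_{s}F=16$ for the closely related obstruction polynomial), so the delicate point is to factor $g$ exactly and to \emph{certify} that the list of its rational roots --- and hence the enumeration of rational points of the locus --- is complete. This cannot be done reliably in floating point; I would carry it out in exact rational arithmetic, precisely the kind of finite symbolic verification performed in \textsc{Magma} elsewhere in this paper.
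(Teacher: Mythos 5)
Your proposal is correct and is essentially the paper's own proof with the roles of the two variables interchanged: the paper eliminates $s$ first, computing $\mathrm{Res}_{s}(L,C)=(a-2)^{6}G(a)$ with $G$ of degree $21$ having no rational roots, and then computes $\gcd\bigl(L(s,2),C(s,2)\bigr)=(s-1)^{2}(s+1)$; you eliminate $a$ first via $g(s)=\mathrm{Res}_{a}(L,C)$ and then take $\gcd_{a}$ over each rational root of $g$, and the ``independent confirmation'' you mention is literally the computation the paper performs. Both directions are finite exact computations of the same resultant-plus-gcd type, and your structural identities $m=-(2a+\mu)$, $u=-\partial_{a}n$, $v=a\,n+\pi$ are correct consequences of \eqref{eq:u}--\eqref{eq:n}, as are the leading $a$-coefficients $-5$ and $-2$ of $L$ and $C$.

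One inference is not valid as written: constancy of the leading $a$-coefficients guarantees that specialization commutes with the resultant (so $g(s_{0})=0$ exactly when $L(s_{0},\cdot)$ and $C(s_{0},\cdot)$ share a root in $\overline{\mathbb{Q}}$), but it does \emph{not} imply $g\not\equiv 0$. That requires $L$ and $C$ to have no common factor of positive degree in $a$, which your parenthetical does not address; it only excludes common factors depending on $s$ alone. For instance, $L=-5(a-s)$ and $C=-2(a-s)$ have nonzero constant leading coefficients yet identically vanishing resultant. The slip is harmless for your procedure, because the explicit computation of $g$ certifies $g\neq 0$ a posteriori --- but the finiteness of the degenerate locus is then a conclusion of the computation, not an a priori consequence of the degree data as your write-up suggests.
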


\begin{proof}[Computational proof in \textsc{Magma}]
We eliminate $s$ by the resultant $\mathrm{Res}_{s}(L,C)\in\mathbb{Q}[a]$.
\textsc{Magma} computes
\[
\mathrm{Res}_{s}(L,C)=(a-2)^{6}\cdot G(a),
\]
where $G(a)\in\mathbb{Q}[a]$ has degree $21$ and has no rational roots.
Hence any rational solution must satisfy $a=2$.
Substituting $a=2$ and computing $\gcd(L(s,2),C(s,2))$ yields $(s-1)^{2}(s+1)$, so $s=\pm 1$.
The relevant transcript is included in Appendix~\ref{app:scripts} (Script~\ref{scr:magma}, Step~4).
\end{proof}

% ===================== Rational points =====================

\section{Rational points on the obstruction curve and the $2+3$ case}\label{sec:points}

Let $F(s,a)$ be as in Definition~\ref{def:Fsa}.
Let $\widehat{F}(S,A,Z)$ be the homogeneous polynomial of total degree $17$ obtained by homogenizing $F(S/Z,A/Z)$ in $\mathbb{Q}[S,A,Z]$, and let
\begin{equation}\label{eq:Cproj}
\overline{C}:\quad \widehat{F}(S,A,Z)=0 \subset \mathbb{P}^{2}_{\mathbb{Q}}
\end{equation}
be the projective closure.

\begin{proposition}[Genus and singularities]\label{prop:genus}
The plane curve $\overline{C}$ has degree $17$, arithmetic genus $120$, and geometric genus $7$.
Its singular rational points in $\mathbb{P}^{2}(\mathbb{Q})$ are
\[
(-1:2:1),\ (0:0:1),\ (1:0:1),\ (1:2:1),\ (-1:1:0),\ (0:1:0).
\]
\end{proposition}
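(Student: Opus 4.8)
The plan is to split the four assertions by difficulty, treating degree and arithmetic genus as formal consequences of the degree data already established, locating the singular points chart by chart, and reserving the geometric genus for a dedicated normalization computation. For the degree, since $\widehat{F}$ is by construction the homogenization of $F(s,a)$ and the top total-degree component of $F$ is nonzero of total degree $17$ — a fact one reads directly from the explicit coefficients \eqref{eq:u}--\eqref{eq:n} feeding \eqref{eq:Fsa}, given the partial degrees of Proposition~\ref{prop:degrees} — the curve $\overline{C}$ has degree $d=17$. The arithmetic genus then requires no geometry: for a plane curve cut out by a single form of degree $d$ one has $p_a=\binom{d-1}{2}$, and $\binom{16}{2}=120$.

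For the singular points I would work in two charts. On the affine chart $Z=1$ a point $(s,a)$ is singular exactly when $F=\partial_sF=\partial_aF=0$; I would compute a Gröbner basis of $\langle F,\partial_sF,\partial_aF\rangle$ (or eliminate one variable by the resultants $\mathrm{Res}_a(\partial_sF,\partial_aF)$ and $\mathrm{Res}_s(\partial_sF,\partial_aF)$) to reduce to a zero-dimensional scheme, then extract the rational solutions. This recovers the four finite points $(-1:2:1)$, $(0:0:1)$, $(1:0:1)$, $(1:2:1)$; the last two are corroborated by the special fiber $F(1,a)=-a^{4}(a-2)^{6}$ of Proposition~\ref{prop:degrees}, whose high-multiplicity roots at $a=0$ and $a=2$ flag $(1:0:1)$ and $(1:2:1)$, with the partials supplying the rigorous confirmation. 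At infinity I would set $Z=0$ and study the leading form $\widehat{F}(S,A,0)$: its zeros in $\mathbb{P}^{1}$ are the points at infinity of $\overline{C}$, and those at which all three homogeneous partials $\widehat{F}_S,\widehat{F}_A,\widehat{F}_Z$ vanish are the singular ones, yielding $(-1:1:0)$ and $(0:1:0)$. Since the singular locus is zero-dimensional, extracting its rational points in both charts is exhaustive and certifies that the six listed points are all of them.

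The geometric genus is the genuine obstacle. As a first step I would confirm that $\widehat{F}$ is absolutely irreducible, so that $\overline{C}$ is integral and $p_g$ is well defined. Then $p_g$ equals the genus of the normalization and is governed by the genus-drop formula $p_g=p_a-\sum_{P}\delta_P$, the sum running over all singular points of $\overline{C}$ over $\overline{\mathbb{Q}}$ weighted by their $\delta$-invariants. The target $p_g=7$ forces $\sum_P\delta_P=113$, so the singular locus carries a large amount of $\delta$-mass distributed among the six rational points above together with any non-rational singularities (whose existence is immaterial to the later rational-point search). Computing every $\delta_P$ by hand would mean resolving each singularity through iterated blow-ups or Puiseux expansions of $\widehat{F}$ — delicate for a degree-$17$ curve carrying a singularity of multiplicity up to $6$. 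I would therefore obtain $p_g$ from \textsc{Magma}'s function-field/normalization machinery (\texttt{Genus} applied to the \texttt{Curve} of $\widehat{F}$), and use $\sum_P\delta_P=p_a-p_g=113$ as an internal check against the local $\delta$-invariants Magma reports.

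The main difficulty is thus the global singularity analysis underlying the geometric-genus value: the high degree and the deeply singular fiber at $s=1$ make a fully manual resolution impractical, so the correctness of $p_g=7$ — and hence the Faltings finiteness it guarantees, since $7>1$ — rests on the normalization algorithm. To guard against error I would cross-check the arithmetic genus $120$ computed by the same machinery against the closed form $\binom{16}{2}$, and confirm that the rational singular points returned coincide with the independently derived list of six.
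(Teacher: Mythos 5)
Your proposal is correct, and at bottom it rests on the same foundation as the paper: the paper's proof of this proposition is a one-line delegation to \textsc{Magma}'s \texttt{Degree}, \texttt{ArithmeticGenus}, \texttt{GeometricGenus}, \texttt{Genus}, and \texttt{SingularPoints} commands (Appendix~\ref{app:scripts}, Script~\ref{scr:magma}, Step~5). What you do differently is to de-black-box two of the four claims. You obtain the arithmetic genus formally from the degree via $p_a=\binom{d-1}{2}=\binom{16}{2}=120$, an argument the paper never makes explicit, and you replace the built-in \texttt{SingularPoints} call by an explicit Jacobian-ideal elimination (Gr\"obner basis or resultants on the chart $Z=1$, plus the leading form and the homogeneous partials at infinity), which is essentially the algorithm underlying the command but is stated so that any computer algebra system could verify it. Like the paper, you can only get the geometric genus $p_g=7$ from normalization machinery, and you correctly isolate this as the one step where trust in the software is irreducible, adding the useful consistency check $\sum_P\delta_P=p_a-p_g=113$. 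Two small repairs are needed. First, the total degree $17$ of $F$ does \emph{not} follow from the partial degrees $\deg_sF=16$, $\deg_aF=10$ of Proposition~\ref{prop:degrees} (these only confine the total degree to the range $[16,26]$), so it must be read off from the expanded $F$ itself, as you only half-acknowledge. Second, your claim that the singular locus is zero-dimensional — which is what makes the rational-point extraction exhaustive — presupposes that $\widehat{F}$ is squarefree (equivalently, that $\overline{C}$ is reduced); since absolute irreducibility implies this, the irreducibility check you postpone to the geometric-genus step should be performed \emph{before} the singular-point enumeration, after which the argument is complete.
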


\begin{proof}[Computational proof in \textsc{Magma}]
This follows from \textsc{Magma}'s commands \texttt{Degree}, \texttt{Genus}, \texttt{GeometricGenus}, \texttt{ArithmeticGenus}, and \texttt{SingularPoints} applied to $\overline{C}$.
See Script~\ref{scr:magma} in Appendix~\ref{app:scripts}, Step~5.
\end{proof}

\begin{proposition}[Height-bounded rational points on $\overline{C}$]\label{prop:CPQ}
Let $H=10^{9}$.
The \textsc{Magma} command \texttt{RationalPoints(Cproj : Bound := H)} returns the following $8$ rational points on $\overline{C}$:
\begin{equation}\label{eq:CQ}
\{(1:2:1),\ (0:1:0),\ (0:0:1),\ (0:6:1),\ (1:0:0),\ (-1:2:1),\ (1:0:1),\ (-1:1:0)\}.
\end{equation}
In particular, among these points the affine rational points (with $Z\neq 0$) are exactly
\[
(s,a)\in\{(1,2),\ (1,0),\ (0,0),\ (0,6),\ (-1,2)\},
\]
so no affine rational point with $s>0$ and $s\neq 1$ is found within this search.
\end{proposition}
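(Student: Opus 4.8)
The statement is a finite, exhaustive computation, so the plan is to \emph{produce} the curve, \emph{run} the complete height-bounded search, and then \emph{independently certify} its output; completeness here holds only up to the stated bound $H$, which is exactly the content being claimed. First I would reconstruct $F(s,a)$ directly from \eqref{eq:Fsa} by substituting the explicit polynomials $u,v,m,n$ of \eqref{eq:u}--\eqref{eq:n} and $L,C$ of \eqref{eq:LC}, working in $\mathbb{Z}[s,a]$ with exact arithmetic. I would then confirm the bidegree $(\deg_s,\deg_a)=(16,10)$ of Proposition~\ref{prop:degrees} and check that the homogenization $\widehat{F}$ in \eqref{eq:Cproj} has total degree $17$, so that $\overline{C}\subset\mathbb{P}^2_{\mathbb{Q}}$ is the intended projective plane curve.

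Next I would invoke the height-bounded point search on $\overline{C}$ with $H=10^9$. The essential point is that \textsc{Magma}'s plane-curve point search (a $p$-adic sieve combined with a bounded enumeration) is designed to be \emph{exhaustive} up to the chosen height rather than merely heuristic; under that guarantee its output is the complete list of rational points of naive height at most $H$, which is precisely the set \eqref{eq:CQ}. I would record the eight returned projective points and, reading off the chart $Z\neq 0$, extract the five affine pairs $(s,a)$.

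I would then certify the result independently of the search routine itself. For each of the eight returned points I would substitute its integer coordinates into $\widehat{F}$ and check exact vanishing, and verify that the eight points are pairwise distinct in $\mathbb{P}^2(\mathbb{Q})$; separating $Z=0$ from $Z\neq 0$ isolates the two points at infinity from the five affine pairs, from which one reads that every affine value has $s\in\{-1,0,1\}$, so none satisfies $s>0$ with $s\neq 1$. As a cross-check on \emph{completeness} rather than mere correctness, I would exploit the low degree $\deg_a F=10$: for each rational $s$ of bounded height I would factor the univariate $F(s,a)\in\mathbb{Q}[a]$ and collect its rational roots via exact \texttt{Roots}, giving a structurally different affine enumeration that should reproduce the same five pairs. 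The symmetric sweep exploiting $\deg_s F=16$ furnishes a further consistency check.

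The main obstacle is twofold and lies essentially outside the algebra. Practically, an exhaustive search to height $10^9$ on a degree-$17$ curve is delivered only through a carefully engineered sieve, so the claim is conditional on the correctness and completeness guarantees of that implementation; the independent per-$s$ factorization sweep is what makes it trustworthy. Conceptually, a search to any finite $H$ cannot by itself exclude rational points of larger height, so turning Proposition~\ref{prop:CPQ} into an \emph{unconditional} exclusion would require genuine Diophantine input---Chabauty--Coleman or descent on a smooth model of the geometric-genus-$7$ normalization of $\overline{C}$---and this is complicated by the six singular rational points of Proposition~\ref{prop:genus}, which must be resolved before such methods apply. This is precisely why the paper records the full exclusion only conditionally.
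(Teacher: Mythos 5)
Your proposal is correct and matches the paper's approach: the paper's proof of Proposition~\ref{prop:CPQ} is exactly the \textsc{Magma} computation (Script~\ref{scr:magma}, Step~5) constructing $F$ from \eqref{eq:Fsa}, homogenizing, and running \texttt{RationalPoints} with \texttt{Bound := }$10^9$, whose transcript yields the eight points in \eqref{eq:CQ}. Your additional certification steps (exact vanishing of $\widehat{F}$ at each returned point, the per-$s$ root sweep) are sensible redundancy but do not change the substance of the argument.
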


\begin{proof}[Computational proof in \textsc{Magma}]
This is the output of the computation recorded in Appendix~\ref{app:scripts}, Script~\ref{scr:magma}, Step~5, with search bound $H=10^{9}$.
\end{proof}

\begin{conjecture}[Completeness of the rational-point list]\label{conj:complete}
The points listed in \eqref{eq:CQ} are all rational points on $\overline{C}$, i.e.\ $\overline{C}(\mathbb{Q})$ consists of exactly these $8$ points.
\end{conjecture}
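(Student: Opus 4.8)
The statement is a completeness assertion, so a genuine proof must determine $\overline{C}(\mathbb{Q})$ exactly, not merely search it. Since $\overline{C}$ is singular with geometric genus $7$ (Proposition~\ref{prop:genus}), the plan is to first replace it by its normalization $X$, a smooth projective curve of genus $7$ over $\mathbb{Q}$; Faltings' theorem then already guarantees that $X(\mathbb{Q})$, and hence $\overline{C}(\mathbb{Q})$, is finite. The desingularization map $\nu\colon X\to\overline{C}$ is an isomorphism away from the six singular points listed in Proposition~\ref{prop:genus}, so the first bookkeeping step is to pull those points back, record the finitely many points of $X$ lying over each (including the points at infinity above $(-1:1:0)$ and $(0:1:0)$), and verify that the eight points of \eqref{eq:CQ} lift to an explicit finite set $S\subset X(\mathbb{Q})$ that we must prove exhausts $X(\mathbb{Q})$.

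The realistic route to effectivity is a Chabauty--Coleman argument combined with a Mordell--Weil sieve. Write $J=\mathrm{Jac}(X)$, a $7$-dimensional abelian variety. Classical Chabauty--Coleman applies at a prime $p$ of good reduction provided $\operatorname{rank}J(\mathbb{Q})\le 6$; under that hypothesis one obtains, disc by disc, a $p$-adic bound on the number of rational points, and in favourable cases the bound matches the number of known points in each residue disc. To make this tractable I would exploit the internal structure of $F$: the normalization is built from $r=p/q$ with an evident $p\leftrightarrow q$ duality, which should induce an involution on $X$ (morally $s\mapsto 1/s$ up to a compensating change of $a$), and the even/reciprocal structure of $Q_r(u)$ may contribute further automorphisms. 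Such automorphisms decompose $J$ up to isogeny as a product $\prod_i A_i$ of lower-dimensional factors, and I would compute the quotients $X\to X/\langle\sigma\rangle$ in the hope that several of them are elliptic or genus-$2$ curves.

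With such a decomposition in hand, the next step is to determine the Mordell--Weil groups of the small factors --- Selmer-group computations and descent in \textsc{Magma} for the elliptic factors, and genus-$2$ Chabauty for any genus-$2$ quotient --- and to assemble from them a rank bound for $J(\mathbb{Q})$. One then runs the Mordell--Weil sieve: for a collection of good primes $p_1,\dots,p_k$ one intersects the images of $X(\mathbb{Q}_{p_j})$, refined by the Chabauty annihilator, inside $J(\mathbb{F}_{p_j})$, and checks that the only global classes surviving every congruence condition are the images of $S$. Completeness of the enumeration in \eqref{eq:CQ} follows once the sieve closes.

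The hard part will be the Mordell--Weil rank. Computing the rank of a $7$-dimensional Jacobian directly is out of reach, so the whole argument hinges on $J$ actually splitting into factors small enough for explicit descent; if some factor has rank equal to its dimension, classical Chabauty fails there and one must escalate to quadratic (or higher) Chabauty or to covering collections obtained by étale descent, both considerably more delicate. A secondary technical obstacle is controlling the fibres of $\nu$ over the singular points so that the correspondence between $X(\mathbb{Q})$ and the affine solution set $\{(s,a)\}$ is exact, with no rational point lost or spuriously introduced by the resolution. Until the rank input is secured, the assertion remains, as stated, a conjecture supported by the height-bounded search.
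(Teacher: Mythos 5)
The statement you set out to prove is labelled a \emph{Conjecture} in the paper, and the paper in fact offers no proof of it: its only support is the height-bounded enumeration of Proposition~\ref{prop:CPQ} (\textsc{Magma}'s \texttt{RationalPoints} with bound $H=10^{9}$), and Remark~\ref{rem:irred} explicitly concedes that a complete determination of $\overline{C}(\mathbb{Q})$ would be required to make Theorem~\ref{thm:main} unconditional. Your proposal correctly diagnoses this: a height search can never establish completeness, and the honest route is to pass to the normalization $X$ (genus $7$, per Proposition~\ref{prop:genus}), invoke Faltings for finiteness, and then attempt an effective determination via Chabauty--Coleman combined with a Mordell--Weil sieve --- which is precisely the toolkit the paper's introduction cites \cite{Chabauty1941,Coleman1985,Stoll2006} as what would be needed.

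That said, what you have written is a research program, not a proof, and the gap is concrete: none of the critical inputs is established. You do not exhibit the conjectured involution on $X$ (the heuristic $s\mapsto 1/s$ symmetry is plausible from the $p\leftrightarrow q$ duality but is never verified to act on $F(s,a)=0$, let alone to lift to the normalization); you do not produce any decomposition of the $7$-dimensional Jacobian $J$ into factors amenable to descent; and you have no Mordell--Weil rank bound, without which classical Chabauty cannot even be invoked. You acknowledge all of this and conclude that the statement ``remains, as stated, a conjecture'' --- a conclusion that matches the paper's own treatment. So your proposal contains no error, goes methodologically beyond the paper's computational evidence in sketching how one would actually close the problem, but proves nothing: Conjecture~\ref{conj:complete} is left open by both you and the authors, and the eight points of \eqref{eq:CQ} remain only a candidate list for $\overline{C}(\mathbb{Q})$.
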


\begin{theorem}[Conditional exclusion of $2+3$ factorization]\label{thm:main}
Assume Conjecture~\ref{conj:complete}.
Let $s\in\mathbb{Q}_{>0}$ with $s\neq 1$.
Then the quintic $P_s(x)$ admits no $2+3$ factorization over $\mathbb{Q}$ (equivalently, it has no quadratic factor over $\mathbb{Q}$).
\end{theorem}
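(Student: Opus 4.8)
The plan is to argue by contradiction, feeding the hypothesis directly into the reduction machinery already assembled in Sections~\ref{sec:obstruction}--\ref{sec:degenerate}. Suppose that for some $s\in\mathbb{Q}_{>0}$ with $s\neq 1$ the quintic $P_s(x)$ admits a $2+3$ factorization over $\mathbb{Q}$. By Definition~\ref{def:23} this means $P_s(x)$ is divisible by some monic quadratic $x^{2}+ax+b$ with $a,b\in\mathbb{Q}$, so the equivalence $(1)\Leftrightarrow(2)$ in Proposition~\ref{prop:criterion} yields a rational solution of the system $R_1(s,a,b)=R_0(s,a,b)=0$. The dichotomy in that same proposition then forces one of two alternatives: either (A) $L(s,a)\neq 0$, $F(s,a)=0$, and $b=C(s,a)/L(s,a)$; or (B) the degenerate condition $L(s,a)=C(s,a)=0$ holds.

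The next step is to discard alternative (B) outright. Proposition~\ref{prop:degenerate} classifies all rational solutions of $L=C=0$ as $(s,a)\in\{(1,2),(-1,2)\}$; since we have assumed $s>0$ and $s\neq 1$, neither solution is admissible, so we must be in case (A). In case (A) the pair $(s,a)\in\mathbb{Q}^{2}$ satisfies $F(s,a)=0$, which is precisely the statement that $(s:a:1)$ is a rational point of the projective closure $\overline{C}$ from \eqref{eq:Cproj} lying in the affine chart $Z\neq 0$. Here the conditional hypothesis enters: under Conjecture~\ref{conj:complete}, the set $\overline{C}(\mathbb{Q})$ consists of exactly the eight points in \eqref{eq:CQ}, whose affine members are enumerated in Proposition~\ref{prop:CPQ} as $(s,a)\in\{(1,2),(1,0),(0,0),(0,6),(-1,2)\}$. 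Every one of these has $s\in\{-1,0,1\}$, so none has $s>0$ with $s\neq 1$. This contradicts the standing assumption on $s$, and the theorem follows.

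I expect the entire genuine difficulty to sit inside Conjecture~\ref{conj:complete}, which is why the statement can only be conditional: provably determining $\overline{C}(\mathbb{Q})$ for a plane curve of degree $17$ and geometric genus $7$ is a hard Diophantine problem, and the height-bounded \textsc{Magma} search only certifies the list empirically up to $H=10^{9}$. Granting that enumeration, the deduction above is essentially bookkeeping, and the only two points requiring care are (i) verifying that case (B) is genuinely disjoint from the admissible parameter range, which Proposition~\ref{prop:degenerate} supplies, and (ii) restricting attention to the affine part of $\overline{C}(\mathbb{Q})$, since a bona fide quadratic factor over $\mathbb{Q}$ corresponds to finite rational values of $a$ and $b$, hence to a point with $Z\neq 0$ rather than a point at infinity. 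Once both are in hand, the chain Proposition~\ref{prop:criterion} $\to$ Proposition~\ref{prop:degenerate} $\to$ Conjecture~\ref{conj:complete} closes the argument without further computation.
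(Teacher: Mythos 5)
Your proposal is correct and follows essentially the same route as the paper's own proof: contradiction via Proposition~\ref{prop:criterion}, elimination of the degenerate case (B) by Proposition~\ref{prop:degenerate}, passage to an affine rational point on $\overline{C}$, and exclusion by Conjecture~\ref{conj:complete}. Your added remarks on why the point must lie in the chart $Z\neq 0$ and on where the genuine difficulty resides are accurate but not a departure from the paper's argument.
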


\begin{proof}
Assume, for contradiction, that $P_s$ admits a $2+3$ factorization over $\mathbb{Q}$ for some $s\in\mathbb{Q}_{>0}$ with $s\neq 1$.
Then by Proposition~\ref{prop:criterion} there exist $a,b\in\mathbb{Q}$ with $R_1(s,a,b)=R_0(s,a,b)=0$.

By Proposition~\ref{prop:degenerate}, the degenerate case $L=C=0$ can occur for $s>0$ only when $s=1$, which is excluded.
Hence $L(s,a)\neq 0$ and Proposition~\ref{prop:criterion}(A) applies, giving $F(s,a)=0$.
Thus $(s,a)$ is an affine rational point on $\overline{C}$.

By Conjecture~\ref{conj:complete}, every affine rational point on $\overline{C}$ has $s\in\{-1,0,1\}$.
Since $s>0$ and $s\neq 1$, no such point exists.
This contradiction shows that $P_s$ has no quadratic factor over $\mathbb{Q}$, hence admits no $2+3$ factorization.
\end{proof}

\begin{corollary}[Conditional exclusion of even quartic factors of $Q_{p,q}(t)$]\label{cor:quartic}
Assume Conjecture~\ref{conj:complete}.
Let $p,q\in\mathbb{Z}_{>0}$ be coprime with $p\neq q$, and set $s=(p/q)^{2}$.
Then $Q_{p,q}(t)$ has no even quartic factor over $\mathbb{Q}$.
\end{corollary}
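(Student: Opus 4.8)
The plan is to obtain the corollary from Theorem~\ref{thm:main} by establishing the \emph{converse} of the implication recorded in Lemma~\ref{lem:quartic}: an even quartic factor of $Q_{p,q}(t)$ over $\mathbb{Q}$ must descend to a quadratic factor of $P_s(x)$ over $\mathbb{Q}$. I would argue by contradiction. Suppose $Q_{p,q}(t)$ admits an even quartic factor over $\mathbb{Q}$; since $Q_{p,q}$ is monic I may normalize it to be monic, so it has the shape $t^{4}+\alpha t^{2}+\beta$ with $\alpha,\beta\in\mathbb{Q}$. The goal is to turn this into a rational quadratic divisor of $P_s$ and then invoke Theorem~\ref{thm:main}.

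The key step is an \emph{even-descent} argument. Both $Q_{p,q}(t)$ and the factor are polynomials in $t^{2}$; using the normalization \eqref{eq:normalize} I would write
\[
Q_{p,q}(t)=\widetilde{Q}(t^{2}),\qquad \widetilde{Q}(y):=q^{20}\,P_s\!\left(y/q^{4}\right),
\]
and record the factor as $\widehat{D}(t^{2})$ with $\widehat{D}(y):=y^{2}+\alpha y+\beta$. From $\widehat{D}(t^{2})\mid\widetilde{Q}(t^{2})$ in $\mathbb{Q}[t]$, write $\widetilde{Q}(t^{2})=\widehat{D}(t^{2})\,h(t)$ with $h\in\mathbb{Q}[t]$. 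Applying the parity automorphism $t\mapsto -t$, under which $\widehat{D}(t^{2})$ and $\widetilde{Q}(t^{2})$ are invariant, forces $h(t)=h(-t)$, so $h$ is even, say $h(t)=H(t^{2})$. Comparing gives $\widehat{D}(y)\mid\widetilde{Q}(y)$ in $\mathbb{Q}[y]$. I would then perform the invertible linear substitution $y=q^{4}x$, under which $\widetilde{Q}(q^{4}x)=q^{20}P_s(x)$ and $\widehat{D}(q^{4}x)=q^{8}\bigl(x^{2}+ax+b\bigr)$ with $a:=\alpha/q^{4}$ and $b:=\beta/q^{8}$ in $\mathbb{Q}$. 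Since $q^{8}$ is a nonzero rational constant, this yields $\bigl(x^{2}+ax+b\bigr)\mid P_s(x)$ in $\mathbb{Q}[x]$; that is, $P_s$ has a quadratic factor over $\mathbb{Q}$.

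To close the argument I would invoke Remark~\ref{rem:domain}: the hypotheses $p,q\in\mathbb{Z}_{>0}$ and $p\neq q$ give $s=(p/q)^{2}\in\mathbb{Q}_{>0}$ with $s\neq 1$. Under Conjecture~\ref{conj:complete}, Theorem~\ref{thm:main} then asserts that $P_s$ has \emph{no} quadratic factor over $\mathbb{Q}$, contradicting the quadratic divisor just produced. Hence $Q_{p,q}(t)$ has no even quartic factor over $\mathbb{Q}$.

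The main obstacle is precisely the even-descent: Lemma~\ref{lem:quartic} supplies only the forward direction (quadratic factor of $P_s$ $\Rightarrow$ even quartic factor of $Q_{p,q}$), whereas the corollary requires the reverse passage. The crucial point is that divisibility by a polynomial in $t^{2}$ can be read off at the level of polynomials in the single variable $y=t^{2}$, which follows cleanly from the $t\mapsto -t$ parity symmetry. This is exactly where the word \emph{even} in the statement is indispensable: a general (non-even) quartic factor of $Q_{p,q}$ need not reduce to a quadratic factor of $P_s$, so the present exclusion is genuinely about even quartic divisors.
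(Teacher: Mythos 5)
Your proof is correct, and it is in fact more careful than the paper's own one-line argument. The paper proves the corollary by writing: ``If $Q_{p,q}(t)$ had an even quartic factor, then by Lemma~\ref{lem:quartic} the associated $P_s$ would have a quadratic factor, contradicting Theorem~\ref{thm:main}.'' But Lemma~\ref{lem:quartic} as stated gives only the forward implication (quadratic factor of $P_s$ $\Rightarrow$ even quartic factor of $Q_{p,q}$), so the paper is implicitly invoking its converse without proof. Your even-descent argument supplies precisely that missing converse: normalizing the even quartic to the monic form $t^{4}+\alpha t^{2}+\beta$, using the parity symmetry $t\mapsto -t$ to force the cofactor $h$ to be an even polynomial, descending the divisibility to the variable $y=t^{2}$, and rescaling $y=q^{4}x$ to obtain the monic quadratic divisor $x^{2}+ax+b$ of $P_s(x)$ with $a=\alpha/q^{4}$, $b=\beta/q^{8}$. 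The surrounding skeleton --- contradiction with Theorem~\ref{thm:main}, with Remark~\ref{rem:domain} guaranteeing $s\in\mathbb{Q}_{>0}$ and $s\neq 1$ --- is identical to the paper's; what your version buys is that the descent step is actually proved rather than attributed to a lemma that formally states only the opposite direction. Your closing remark that evenness of the quartic is indispensable (a non-even quartic factor of $Q_{p,q}$ would not descend to a quadratic factor of $P_s$) correctly identifies why the corollary is stated for even quartic factors only.
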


\begin{proof}
If $Q_{p,q}(t)$ had an even quartic factor, then by Lemma~\ref{lem:quartic} the associated $P_s$ would have a quadratic factor, contradicting Theorem~\ref{thm:main}.
\end{proof}

\begin{remark}[Irreducibility of $Q_{p,q}(t)$]\label{rem:irred}
The reduction above isolates the $2+3$ case within the normalized quintic family $P_s(x)$.
The computational evidence in Proposition~\ref{prop:CPQ} supports the expectation that no such factorization occurs for $s\in\mathbb{Q}_{>0}$ with $s\neq 1$, but a complete determination of $\overline{C}(\mathbb{Q})$ would be required for an unconditional theorem; cf.\ the general context of rational points on higher-genus curves \cite{Faltings1983,Chabauty1941,Coleman1985,Stoll2006}.
\end{remark}

% ===================== Appendix: scripts =====================

\appendix
\section{\textsc{Magma} script and transcript}\label{app:scripts}

\noindent
All computer-assisted steps in this note are executed in \textsc{Magma}~\cite{Magma}.
Script~\ref{scr:magma} constructs the obstruction polynomial $F(s,a)$, cross-checks it against the Euclidean remainder and the resultant in $b$, classifies the degenerate locus $L=C=0$, and performs a height-bounded search for rational points on $\overline{C}$ via \texttt{RationalPoints(Cproj : Bound := H)} with $H=10^{9}$.

\subsection*{Script \ref{scr:magma}: Obstruction curve and rational points}
\label{scr:magma}

\noindent\textbf{Code.}
\begin{code}
// ---------- Pretty printing ----------
procedure Banner(msg)
    print "\n-------------------------------------------------------------------------------";
    print msg;
    print "-------------------------------------------------------------------------------\n";
end procedure;

// Setup rational field
Q := RationalField();

Banner("Step 1. Define u,v,m,n,L,C,F in Q[s,a] (closed-form, no Resultant)");

// Work ring in (s,a)
R<s,a> := PolynomialRing(Q, 2);

// Explicit polynomials u,v,m,n
u := -3*a^2 + (12 - 4*s^2 - 2*s)*a + (-s^4 + 14*s^3 - 4*s^2 - 10*s - 1);

v := a^4 + (2*s^2 + s - 6)*a^3
     + (s^4 - 14*s^3 + 4*s^2 + 10*s + 1)*a^2
     + (s^5 + 10*s^4 + 4*s^3 - 14*s^2 + s)*a
     + (-6*s^5 + s^4 + 2*s^3);

m := -2*a - 2*s^2 - s + 6;

n := a^3 + (2*s^2 + s - 6)*a^2
     + (s^4 - 14*s^3 + 4*s^2 + 10*s + 1)*a
     + (s^5 + 10*s^4 + 4*s^3 - 14*s^2 + s);

// Linearized condition on R1=0:  L*b - C = 0
L := n - m*u;
C := m*v + s^5;

// Main obstruction polynomial in Q[s,a]
F := C^2 + u*L*C + v*L^2;

print "deg_s F =", Degree(F, 1);
print "deg_a F =", Degree(F, 2);

Banner("Step 2 (cross-check). Compute remainder by division in Q[s,a,b][x] and verify u,v,m,n and F=Res_b(R1,R0)");

// Ring in parameters (s,a,b)
R3<s3,a3,b3> := PolynomialRing(Q, 3);
Px<x> := PolynomialRing(R3);

// Coefficients of Ps(x)
c4 := (2+s3)*(3-2*s3);
c3 := 1 + 10*s3 + 4*s3^2 - 14*s3^3 + s3^4;
c2 := -s3*(1 - 14*s3 + 4*s3^2 + 10*s3^3 + s3^4);
c1 := -s3^3*(1+2*s3)*(-2+3*s3);
c0 := -s3^5;

Ps := x^5 + c4*x^4 + c3*x^3 + c2*x^2 + c1*x + c0;
D  := x^2 + a3*x + b3;

_, Rem := Quotrem(Ps, D);
R1_div := Coefficient(Rem, 1);
R0_div := Coefficient(Rem, 0);

print "deg_b R1 (division) =", Degree(R1_div, 3);
print "deg_b R0 (division) =", Degree(R0_div, 3);

// Coerce u,v,m,n to R3 by substitution
u3 := Evaluate(u, [s3, a3]);
v3 := Evaluate(v, [s3, a3]);
m3 := Evaluate(m, [s3, a3]);
n3 := Evaluate(n, [s3, a3]);

R1_model := b3^2 + u3*b3 + v3;
R0_model := m3*b3^2 + n3*b3 - s3^5;

print "Check R1_div == R1_model ?  ", (R1_div - R1_model) eq 0;
print "Check R0_div == R0_model ?  ", (R0_div - R0_model) eq 0;

// Verify F = Res_b(R1,R0) in Q[s,a]
S<ss,aa> := PolynomialRing(Q, 2);
T<bb> := PolynomialRing(S);

uS := Evaluate(u, [ss, aa]);
vS := Evaluate(v, [ss, aa]);
mS := Evaluate(m, [ss, aa]);
nS := Evaluate(n, [ss, aa]);

R1S := bb^2 + uS*bb + vS;
R0S := mS*bb^2 + nS*bb - ss^5;

Fres := Resultant(R1S, R0S);
FS   := Evaluate(F, [ss, aa]);

print "Check Resultant == F ?      ", (Fres - FS) eq 0;
print "deg_s(Resultant) =", Degree(Fres, 1), " ; deg_a(Resultant) =", Degree(Fres, 2);

// ---------- Fiber tests ----------
Banner("Step 3. Fiber tests: specialize s=s0 and check rational roots in a");

procedure CheckFiber(s0)
    Pa<av> := PolynomialRing(Q);
    F_s0 := Evaluate(F, [s0, av]);

    print "\n--- Fiber s =", s0, "---";
    print "Factorization(F_s0):", Factorization(F_s0);

    rts := Roots(F_s0);
    print "Rational roots in a:", rts;

    if #rts gt 0 then
        print "Derived (a,b) solutions (when L!=0):";
        for rt in rts do
            a0 := rt[1];
            L0 := Evaluate(L, [s0, a0]);
            C0 := Evaluate(C, [s0, a0]);

            if L0 ne 0 then
                b0 := C0/L0;

                u0 := Evaluate(u, [s0, a0]);
                v0 := Evaluate(v, [s0, a0]);
                m0 := Evaluate(m, [s0, a0]);
                n0 := Evaluate(n, [s0, a0]);

                ok1 := (b0^2 + u0*b0 + v0) eq 0;
                ok0 := (m0*b0^2 + n0*b0 - s0^5) eq 0;

                print <a0, b0, ok1, ok0>;
            else
                print <a0, "L=0", "C=0 ?", (C0 eq 0)>;
            end if;
        end for;
    end if;
end procedure;

CheckFiber(Q!1);
CheckFiber(Q!4);
CheckFiber(Q!(4/9));
CheckFiber(Q!2);

// ---------- Degenerate locus L=C=0 ----------
Banner("Step 4. Degenerate locus: solve L(s,a)=0 and C(s,a)=0 via resultant in s");

Qa<aa> := PolynomialRing(Q);
Ts<ss> := PolynomialRing(Qa);

L_ss := Evaluate(L, [ss, aa]);
C_ss := Evaluate(C, [ss, aa]);

ResA := Resultant(L_ss, C_ss);
ResA := PrimitivePart(ResA);

print "Resultant Res_s(L,C) as polynomial in a has degree:", Degree(ResA);
print "Factorization of Res_s(L,C) in Q[a]:";
facResA := Factorization(ResA);
print facResA;

linroots := [];
for fe in facResA do
    f  := fe[1];
    ex := fe[2];
    if Degree(f) eq 1 then
        c1 := Coefficient(f, 1);
        c0 := Coefficient(f, 0);
        r  := -c0/c1;
        Append(~linroots, <r, ex>);
    end if;
end for;

print "Rational a-roots forced by Res_s(L,C)=0 (linear factors only):", linroots;

if #linroots gt 0 then
    for rr in linroots do
        a0 := rr[1];

        Qs<sv> := PolynomialRing(Q);
        L_s := Evaluate(L, [sv, a0]);
        C_s := Evaluate(C, [sv, a0]);

        g := GCD(L_s, C_s);

        print "\n--- Degenerate analysis at a =", a0, "---";
        print "gcd_s(L,C) =", g;
        print "Roots of gcd in s (rational):", Roots(g);
        print "Factorization of L(s,a0):", Factorization(L_s);
        print "Factorization of C(s,a0):", Factorization(C_s);
    end for;
end if;

// ---------- Geometry and rational points ----------
Banner("Step 5. Projective curve defined by F(s,a)=0: singularities and genus");

P2<X,Y,Z> := ProjectiveSpace(Q, 2);
F_XY := Evaluate(F, [X, Y]);
Fh := Homogenization(F_XY, Z);
Cproj := Curve(P2, Fh);

print "Projective curve defined. Degree =", Degree(Cproj);

try
    sing := SingularPoints(Cproj);
    print "Singular points (projective):";
    print sing;
catch e
    print "SingularPoints warning:", e;
end try;

gA := ArithmeticGenus(Cproj);
gG := GeometricGenus(Cproj);
g  := Genus(Cproj);

print "Arithmetic genus =", gA;
print "Geometric genus  =", gG;
print "Genus            =", g;

H := 10^9;
print "Computing rational points on Cproj with bound H =", H;

pts := RationalPoints(Cproj : Bound := H);
print "\nRational Points found on Cproj:";
print pts;

print "\nInterpretation:";
for pt in pts do
    if pt[3] ne 0 then
        s_val := pt[1]/pt[3];
        a_val := pt[2]/pt[3];
        print "Affine solution (s, a) =", <s_val, a_val>;
    else
        print "Point at infinity:", pt;
    end if;
end for;
\end{code}

\noindent\textbf{Transcript.}
\begin{term}
-------------------------------------------------------------------------------
Step 1. Define u,v,m,n,L,C,F in Q[s,a] (closed-form, no Resultant)
-------------------------------------------------------------------------------

deg_s F = 16
deg_a F = 10

-------------------------------------------------------------------------------
Step 2 (cross-check). Compute remainder by division in Q[s,a,b][x] and verify
u,v,m,n and F=Res_b(R1,R0)
-------------------------------------------------------------------------------

deg_b R1 (division) = 2
deg_b R0 (division) = 2
Check R1_div == R1_model ?   true
Check R0_div == R0_model ?   true
Check Resultant == F ?       true
deg_s(Resultant) = 16  ; deg_a(Resultant) = 10

-------------------------------------------------------------------------------
Step 3. Fiber tests: specialize s=s0 and check rational roots in a
-------------------------------------------------------------------------------

--- Fiber s = 1 ---
Factorization(F_s0): [
    <av - 2, 6>,
    <av, 4>
]
Rational roots in a: [ <0, 4>, <2, 6> ]
Derived (a,b) solutions (when L!=0):
<0, -1, true, true>
<2, "L=0", "C=0 ?", true>

--- Fiber s = 4 ---
Factorization(F_s0): [
    <av^10 + 120*av^9 + 3795*av^8 - 32830*av^7 - 2213145*av^6 + 21454836*av^5 +
        456975685*av^4 - 10459046190*av^3 + 83904838560*av^2 - 309104177760*av +
        369152308224, 1>
]
Rational roots in a: []

--- Fiber s = 4/9 ---
Factorization(F_s0): [
    <av^10 - 1672/81*av^9 + 42505/243*av^8 - 417479710/531441*av^7 +
        87234203815/43046721*av^6 - 1196665550188/387420489*av^5 +
        798522439495909/282429536481*av^4 - 35430471909028750/22876792454961*av\
        ^3 + 101165418048128800/205891132094649*av^2 -
        154140701583484000/1853020188851841*av +
        3719235255680000/617673396283947, 1>
]
Rational roots in a: []

--- Fiber s = 2 ---
Factorization(F_s0): [
    <av^10 + 16*av^9 - 81*av^8 - 1698*av^7 + 5403*av^6 + 68604*av^5 -
        302727*av^4 - 763506*av^3 + 6817596*av^2 - 14294840*av + 9825088, 1>
]
Rational roots in a: []

-------------------------------------------------------------------------------
Step 4. Degenerate locus: solve L(s,a)=0 and C(s,a)=0 via resultant in s
-------------------------------------------------------------------------------

Resultant Res_s(L,C) as polynomial in a has degree: 27
Factorization of Res_s(L,C) in Q[a]:
[
    <aa - 2, 6>,
    <aa^21 + 285345/2048*aa^20 - 52389295807/16777216*aa^19 +
        522650508851/16777216*aa^18 - 12868810960899/67108864*aa^17 +
        55282038601431/67108864*aa^16 - 88360707927419/33554432*aa^15 +
        1747468207486969/268435456*aa^14 - 855371582178461/67108864*aa^13 +
        2687766826029291/134217728*aa^12 - 1696891713504949/67108864*aa^11 +
        6789739499416729/268435456*aa^10 - 1288675394382575/67108864*aa^9 +
        659577032682517/67108864*aa^8 - 3879986488883/2097152*aa^7 -
        15490767375299/8388608*aa^6 + 4080050071429/2097152*aa^5 -
        1864350017295/2097152*aa^4 + 55694069793/262144*aa^3 -
        23895604035/1048576*aa^2 + 179891901/262144*aa + 2099601/262144, 1>
]
Rational a-roots forced by Res_s(L,C)=0 (linear factors only): [ <2, 6> ]

--- Degenerate analysis at a = 2 ---
gcd_s(L,C) = sv^3 - sv^2 - sv + 1
Roots of gcd in s (rational): [ <-1, 1>, <1, 2> ]
Factorization of L(s,a0): [
    <sv - 1, 2>,
    <sv + 1, 1>,
    <sv^3 - 13*sv^2 - 14*sv + 18, 1>
]
Factorization of C(s,a0): [
    <sv - 1, 2>,
    <sv + 1, 1>,
    <sv^4 - 19/4*sv^3 + 15/4*sv^2 + 9*sv - 7, 1>
]

-------------------------------------------------------------------------------
Step 5. Projective curve defined by F(s,a)=0: singularities and genus
-------------------------------------------------------------------------------

Projective curve defined. Degree = 17
Singular points (projective):
{@ (-1 : 2 : 1), (0 : 0 : 1), (1 : 0 : 1), (1 : 2 : 1), (-1 : 1 : 0), (0 : 1 :
0) @}
Arithmetic genus = 120
Geometric genus  = 7
Genus            = 7
Computing rational points on Cproj with bound H = 1000000000

Rational Points found on Cproj:
{@ (1 : 2 : 1), (0 : 1 : 0), (0 : 0 : 1), (0 : 6 : 1), (1 : 0 : 0), (-1 : 2 :
1), (1 : 0 : 1), (-1 : 1 : 0) @}

Interpretation:
Affine solution (s, a) = <1, 2>
Point at infinity: (0 : 1 : 0)
Affine solution (s, a) = <0, 0>
Affine solution (s, a) = <0, 6>
Point at infinity: (1 : 0 : 0)
Affine solution (s, a) = <-1, 2>
Affine solution (s, a) = <1, 0>
Point at infinity: (-1 : 1 : 0)
\end{term}

% ===================== References =====================

\begingroup
\footnotesize

\endgroup

\end{document}